\newtheorem{theorem}{Theorem}[section]
\newtheorem{corollary}{Corollary}[section]
\numberwithin{equation}{section}
\begin{document}

\title[Coefficients of asymptotic expansion]{On the coefficients of\\ the asymptotic expansion of $n!$}
\author{Gerg\H{o} Nemes}	
\address{Lor\'and E\"otv\"os University, H-1117 Budapest, P\'azm\'any P\'eter s\'et\'any 1/C, Hungary}
\email{nemesgery@gmail.com}
\date{}

\begin{abstract}
Applying a theorem of Howard for a formula recently proved by Brassesco and M\'{e}ndez, we derive new simple explicit formulas for the coefficients of the asymptotic expansion to the sequence of factorials. To our knowledge no explicit formula containing only the four basic operations was known until now.
\end{abstract}

\subjclass[2000]{11B65; 11B73; 41A60}

\keywords{asymptotic expansions; factorial; Stirling coefficients; Stirling's formula; Stirling numbers}

\maketitle

\section{Introduction}

It is well known that the factorial of a positive integer $n$ has the asymptotic expansion
\begin{equation}\label{eq1}
n! \sim n^n e^{ - n} \sqrt {2\pi n} \sum\limits_{k \ge 0} {\frac{{a_k }}{{n^k }}} ,
\end{equation}
known as the Stirling's formula (see, e.g., \cite{ref1,ref3,ref4}). The coefficients $a_k$ in this series are usually called the Stirling coefficients \cite{ref1,ref6} and can be computed from the sequence $b_k$ defined by the recurrence relation
\begin{equation}
b_k = \frac{1}{{k + 1}}\left( {b_{k - 1}  - \sum\limits_{j = 2}^{k - 1} {jb_j b_{k - j + 1} } } \right),\;b_0 = b_1 = 1,
\end{equation}
as $a_k  = \left( {2k + 1} \right)!!b_{2k + 1}$ \cite{ref3,ref4}. It was pointed out by Paris and Kaminski \cite{ref6} that ``There is no known closed-form representation for the Stirling coefficients''. However there is a closed-form expression that involves combinatorial quantities due to Comtet \cite{ref5}:
\begin{equation}
a_k  = \sum\limits_{j = 0}^{2k} {\left( { - 1} \right)^j \frac{{d_3 \left( {2k + 2j,j} \right)}}{{2^{k + j} \left( {k + j} \right)!}}},
\end{equation}
where $d_3\left(p,q\right)$ is the number of permutations of $p$ with $q$ permutation cycles all of which are $\geq 3$. Brassesco and M\'{e}ndez proved in a recent paper \cite{ref7} that
\begin{equation}
a_k  = \sum\limits_{j = 0}^{2k} {\left( { - 1} \right)^j \frac{{S_3 \left( {2k + 2j,j} \right)}}{{2^{k + j} \left( {k + j} \right)!}}},
\end{equation}
where $S_3\left(p,q\right)$ denotes the $3$-associated Stirling numbers of the second kind. We show that the Stirling coefficients $a_k$ can be expressed in terms of the conventional Stirling numbers of the second kind. A corollary of this result is that the Stirling coefficients have a representation that involves only the four basic operations, i.e., an explicit, exact expression.

\section{The formulas for coefficients}

One of our main results is the following:
\begin{theorem}\label{thm1} The Stirling coefficients have a representation of the form
\begin{equation}
a_k  = \frac{\left(2k\right)!}{2^k k!}\sum\limits_{i = 0}^{2k} {\binom{k + i - 1/2}{i}\binom{3k + 1/2}{2k - i}2^{i}\sum\limits_{j = 0}^i {\binom{i}{j}\left( { - 1} \right)^j j!} \frac{{S\left( {2k + i + j,j} \right)}}{{\left( {2k + i + j} \right)!}}},
\end{equation}
where $S\left(p,q\right)$ denotes the Stirling numbers of the second kind.
\end{theorem}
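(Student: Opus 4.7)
The plan is to combine the Brassesco--M\'endez formula with a Howard-type identity expressing the $3$-associated Stirling numbers in terms of the ordinary Stirling numbers of the second kind. Starting from the exponential generating function $\sum_{n\geq 0}S_3(n,k)\,x^n/n! = (e^x-1-x-x^2/2)^k/k!$, I would write $e^x-1-x-x^2/2 = (e^x-1)-x(1+x/2)$ and apply the binomial theorem twice: first to the $k$-th power of the difference, then to $(1+x/2)^I$. Extracting coefficients via $(e^x-1)^{k-I}/(k-I)! = \sum_m S(m,k-I)x^m/m!$ yields
\begin{equation*}
S_3(n,k) = n!\sum_{I=0}^{k}\sum_{p=0}^{I}\frac{(-1)^{I}}{2^{p}\,I!}\binom{I}{p}\frac{S(n-I-p,\,k-I)}{(n-I-p)!}.
\end{equation*}

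Next I would substitute this into the Brassesco--M\'endez formula (with $n=2k+2J$ and the outer index $J$ playing the role of $k$ in the identity), producing a triple sum in $J,I,p$. The natural reparametrisation is $I = J-j$ and $p = J-i$: this transforms the Stirling factor to $S(2k+i+j,j)/(2k+i+j)!$ and, after reordering the sums so that $0\leq j\leq i\leq 2k$ and $i\leq J\leq 2k$, lets the factor $(J-j)!$ cancel and the sign collapse to $(-1)^{j}$. The expression then becomes
\begin{equation*}
a_k = \sum_{i=0}^{2k}\sum_{j=0}^{i}(-1)^{j}\binom{i}{j}\,j!\,\frac{S(2k+i+j,j)}{(2k+i+j)!}\,C_{k,i},
\end{equation*}
where $C_{k,i}$ is an inner sum over $J$ that depends only on $k$ and $i$.

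The heart of the proof is the closed-form evaluation of $C_{k,i}$. The elementary identity $(2n)!/n! = 4^{n}(1/2)_{n}$ (with $(a)_n$ denoting the Pochhammer symbol) reduces the summand of $C_{k,i}$ to $2^{k+i}(1/2)_{k+i+M}/M!$ after the substitution $M = J-i$; factoring out $(1/2)_{k+i}$ recasts $C_{k,i}$ in terms of the partial sum $\sum_{M=0}^{2k-i}\binom{k+i-1/2+M}{M}$. The upper-index hockey-stick identity
\begin{equation*}
\sum_{M=0}^{N}\binom{a+M}{M} = \binom{a+N+1}{N},
\end{equation*}
valid for arbitrary real $a$ because both sides are polynomials in $a$ that coincide at every nonnegative integer, then evaluates the partial sum to $\binom{3k+1/2}{2k-i}$. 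Finally, $2^{k+i}(1/2)_{k+i} = (2k+2i-1)!! = \binom{k+i-1/2}{i}\,i!\,2^{i}\,(2k-1)!!$ combined with $(2k-1)!! = (2k)!/(2^{k}k!)$ produces the prefactor $\frac{(2k)!}{2^{k}k!}\binom{k+i-1/2}{i}\binom{3k+1/2}{2k-i}2^{i}$ demanded by Theorem~\ref{thm1}.

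The main obstacle is spotting the reparametrisation $(J,I,p)\mapsto(i,j,J)$ that isolates $C_{k,i}$ and reveals its underlying Pochhammer structure; once this change of variables is in place, every remaining step is a routine manipulation of double factorials and binomial coefficients.
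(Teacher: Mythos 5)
Your proposal is correct, but it takes a genuinely different route from the paper's. The paper starts from the \emph{other} Brassesco--M\'endez result, the derivative representation $a_k = \frac{1}{2^k k!}\partial^{2k}\bigl(G^{-(2k+1)/2}\bigr)(0)$ with $G(x)=2(e^x-x-1)/x^2$, recognizes the right-hand side as the potential polynomial $G_{2k}^{(k+1/2)}$ attached to $F(x)=e^x-x-1$ (with $r=2$, $a_2=a_3=\cdots=1$), invokes Howard's theorem to turn it into a finite sum of Bell polynomials $B_{2k+2i,i}(0,1,1,\ldots)$, and only then expands those Bell polynomials in ordinary Stirling numbers via $(e^x-x-1)^i=\sum_j\binom{i}{j}(-x)^{i-j}(e^x-1)^j$. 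You instead start from the $S_3$-sum formula $a_k=\sum_{J}(-1)^J S_3(2k+2J,J)/\bigl(2^{k+J}(k+J)!\bigr)$, perform the analogous binomial expansion on $(e^x-1-x-x^2/2)^J$ to express $S_3$ in ordinary Stirling numbers, and are then left with the closed-form evaluation of the inner sum $C_{k,i}$ over $J$. I verified that evaluation: under $(I,p)=(J-j,J-i)$ the sign indeed collapses to $(-1)^j$ and the Stirling factor becomes $S(2k+i+j,j)/(2k+i+j)!$; the powers of two combine to $2^{k+i}$ via $(2k+2J)!/(k+J)!=4^{k+J}(1/2)_{k+J}$; the hockey-stick identity with real upper index gives $\binom{3k+1/2}{2k-i}$; and the double-factorial bookkeeping $2^{k+i}(1/2)_{k+i}=(2k+2i-1)!!=\binom{k+i-1/2}{i}\,i!\,2^i\,(2k)!/(2^k k!)$ produces exactly the prefactor in Theorem~\ref{thm1}. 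In effect your computation of $C_{k,i}$ is an elementary, self-contained re-derivation of the special case of Howard's theorem that the paper uses as a black box: the cost is the extra closed-form summation, the benefit is that nothing is needed beyond the exponential generating function of $S_3$ and classical binomial identities.
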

From the explicit formula
\[
S\left( {p,q} \right) = \frac{1}{{q!}}\sum\limits_{l = 0}^q {\left( { - 1} \right)^l \binom{q}{l}\left( {q - l} \right)^p } ,
\]
we immediately obtain our second main result.
\begin{corollary} The Stirling coefficients have an exact representation of the form
\begin{equation}
a_k  = \frac{\left(2k\right)!}{2^k k!}\sum\limits_{i = 0}^{2k} {\binom{k + i - 1/2}{i}\binom{3k + 1/2}{2k - i}2^{i}\sum\limits_{j = 0}^i {\binom{i}{j}\frac{\left( { - 1} \right)^j}{\left( {2k + i + j} \right)!}}\sum\limits_{l = 0}^j {\left( { - 1} \right)^l \binom{j}{l}\left( {j - l} \right)^{2k + i + j} }} .
\end{equation}
\end{corollary}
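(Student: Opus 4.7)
The plan is to obtain the Corollary by a direct substitution of the textbook expression for $S(p,q)$, stated immediately before the Corollary, into the formula of Theorem~\ref{thm1}. Since all of the combinatorial content is already contained in Theorem~\ref{thm1}, what remains is a purely algebraic manipulation that hinges on a single clean cancellation.

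Concretely, I would set $p=2k+i+j$ and $q=j$ in the identity
\[
S(p,q)=\frac{1}{q!}\sum_{l=0}^{q}(-1)^l\binom{q}{l}(q-l)^{p},
\]
which yields
\[
\frac{j!\,S(2k+i+j,j)}{(2k+i+j)!}=\frac{1}{(2k+i+j)!}\sum_{l=0}^{j}(-1)^l\binom{j}{l}(j-l)^{2k+i+j}.
\]
This is precisely the combination that occurs inside the innermost sum of Theorem~\ref{thm1}: the factor $j!$ appearing there is exactly what cancels the $1/j!$ produced by the explicit Stirling-number formula. Substituting this into the expression of Theorem~\ref{thm1}, carrying the $(-1)^j$ and the $1/(2k+i+j)!$ past the now-innermost $l$-summation, and keeping the outer two sums and the prefactor $\frac{(2k)!}{2^k k!}\binom{k+i-1/2}{i}\binom{3k+1/2}{2k-i}2^{i}$ unchanged, one arrives term-for-term at the identity displayed in the Corollary.

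I do not foresee any real obstacle here; the argument is essentially bookkeeping. The only point that requires care is checking that the ranges of summation are preserved (in particular that the $l$-sum still runs from $0$ to $j$), and that no stray factorial is lost when the two $j!$ factors cancel. There are no analytic or combinatorial subtleties: no convergence issues, no reindexing beyond what is indicated above, and no identity beyond the elementary formula for $S(p,q)$ is needed. Thus the Corollary follows at once from Theorem~\ref{thm1}, with the conceptual content being that the combinatorial Stirling numbers $S(p,q)$ in Theorem~\ref{thm1} can be unfolded into an explicit finite sum involving only sums, products, integer powers, and binomial coefficients, i.e., the four basic arithmetic operations advertised in the abstract.
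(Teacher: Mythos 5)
Your proposal is correct and coincides with the paper's own derivation: the Corollary is obtained exactly by substituting the explicit formula for $S(p,q)$ with $p=2k+i+j$, $q=j$ into Theorem~\ref{thm1}, where the factor $j!$ cancels against the $1/j!$ from that formula. The bookkeeping you describe, including the preserved summation ranges, is precisely what the paper leaves implicit in its one-line justification.
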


Note that this formula contains only the four basic operations. To prove Theorem \ref{thm1} we need some concepts. Let $r \geq 0$ and $a_r \neq 0$, let $F\left( x \right) = \sum\nolimits_{j \ge r} {a_j x^j / j!}$ be a formal power series. The potential polynomials $F^{\left(z\right)}_n$ in the variable $z$ are defined by the exponential generating function
\begin{equation}\label{pp1}
\left( {\frac{{a_r x^r / r!}}{{F\left( x \right)}}} \right)^z  = \sum\limits_{n \ge 0} {F_n^{\left( z \right)} \frac{{x^n }}{{n!}}} .
\end{equation}
For $r \geq 1$, the exponential Bell polynomials $B_{n,i}\left(0,\ldots,0,a_r,a_{r+1},\ldots\right)$ in an infinite number of variables $a_r, a_{r+1},\ldots$ can be defined by
\begin{equation}\label{pp2}
\left( {F\left( x \right)} \right)^i  = i!\sum\limits_{n \ge 0} {B_{n,i} \left( {0, \ldots ,0,a_r ,a_{r + 1} , \ldots } \right)\frac{{x^n }}{{n!}}} .
\end{equation}
The following theorem is due to Howard \cite{ref2}.
\begin{theorem} If $F^{\left( z \right)}_n$ is defined by \eqref{pp1} and $B_{n,i}$ is defined by \eqref{pp2}, then
\begin{equation}
F_n^{\left( z \right)}  = \sum\limits_{i = 0}^n {\left( { - 1} \right)^i \binom{z + i - 1}{i}\binom{z + n}{n - i}\left( {\frac{{r!}}{{a_r }}} \right)^{i}\frac{{n!i!}}{{\left( {n + ri} \right)!}}B_{n + ri,i} \left( {0, \ldots ,0,a_r ,a_{r + 1} , \ldots } \right)} .
\end{equation}
\end{theorem}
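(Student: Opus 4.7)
The plan is to start from $\left(\frac{a_r x^r/r!}{F(x)}\right)^z = h(x)^{-z}$, where $h(x) := r!\,F(x)/(a_r x^r)$ is a formal power series with constant term $1$, and apply the generalized binomial theorem
\[
h(x)^{-z} = \sum_{i \ge 0}(-1)^i\binom{z+i-1}{i}(h(x)-1)^i.
\]
Since $h(x) - 1 = \frac{r!}{a_r x^r}\bigl(F(x) - a_r x^r/r!\bigr) = O(x)$, this makes sense as a formal power series, and for each fixed $n$ only the terms with $i \le n$ contribute to $[x^n]$.

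Next I would expand $(F(x) - a_r x^r/r!)^i$ by the ordinary binomial theorem to obtain
\[
(h(x)-1)^i = \sum_{j=0}^{i}\binom{i}{j}(-1)^{i-j}\left(\frac{r!}{a_r}\right)^j x^{-rj} F(x)^j.
\]
Substituting this back into the binomial series for $h(x)^{-z}$ and invoking the definition \eqref{pp2} of the Bell polynomials, which together with the vanishing $B_{m,j}=0$ for $m < rj$ gives $n!\,[x^n]\bigl(x^{-rj}F(x)^j\bigr) = \frac{n!\,j!}{(n+rj)!}B_{n+rj,j}$, converts the coefficient $F_n^{(z)} = n![x^n]h(x)^{-z}$ into a double sum indexed by $i$ and $j$, with the factors $(-1)^i(-1)^{i-j} = (-1)^j$ combining cleanly.

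Finally I would swap the order of summation to group by $j$. The resulting inner sum over $i$ is $\sum_{i=j}^{n}\binom{z+i-1}{i}\binom{i}{j}$, and the whole argument hinges on the combinatorial identity
\[
\sum_{i=j}^{n}\binom{z+i-1}{i}\binom{i}{j} = \binom{z+j-1}{j}\binom{z+n}{n-j}.
\]
This identity follows by first writing $\binom{z+i-1}{i}\binom{i}{j} = \binom{z+j-1}{j}\binom{z+i-1}{i-j}$ (the ``subset-of-a-subset'' rule), then setting $m=i-j$ and applying the hockey-stick identity $\sum_{m=0}^{n-j}\binom{z+j-1+m}{m} = \binom{z+n}{n-j}$. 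Renaming $j$ back to $i$ in the outer sum then reproduces Howard's formula exactly. Locating and verifying this inner identity is the main obstacle; once it is in hand, the remainder of the proof is routine formal power-series bookkeeping, with the only subtle point being the justification that $x^{-rj}F(x)^j$ is a genuine formal power series with non-negative exponents so that $[x^n]$ is meaningful term by term.
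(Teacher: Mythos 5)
The paper does not actually prove this theorem: it is stated as a quoted result of Howard with only a citation to \cite{ref2}, so there is no in-text proof to compare yours against. Judged on its own, your argument is correct and complete. Setting $h(x)=r!\,F(x)/(a_r x^r)=1+O(x)$ and expanding $h(x)^{-z}=\sum_{i\ge 0}(-1)^i\binom{z+i-1}{i}(h(x)-1)^i$ is legitimate as a formal power series identity (the coefficient of each $x^n$ is a polynomial in $z$, and since $h-1=O(x)$ only the terms with $i\le n$ contribute to $[x^n]$); the binomial expansion of $(h-1)^i$, the coefficient extraction $n!\,[x^n]\bigl(x^{-rj}F(x)^j\bigr)=\frac{n!\,j!}{(n+rj)!}B_{n+rj,j}$ via \eqref{pp2}, and the sign bookkeeping $(-1)^i(-1)^{i-j}=(-1)^j$ are all exactly right. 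The one nontrivial ingredient is the identity
\[
\sum_{i=j}^{n}\binom{z+i-1}{i}\binom{i}{j}=\binom{z+j-1}{j}\binom{z+n}{n-j},
\]
and your verification of it is sound: both sides of $\binom{z+i-1}{i}\binom{i}{j}=\binom{z+j-1}{j}\binom{z+i-1}{i-j}$ equal $z(z+1)\cdots(z+i-1)/\bigl(j!\,(i-j)!\bigr)$, and the shifted hockey-stick sum $\sum_{m=0}^{n-j}\binom{z+j-1+m}{m}=\binom{z+n}{n-j}$ holds as a polynomial identity in $z$ because both sides are polynomials of degree $n-j$ that agree for all nonnegative integer values of $z+j-1$. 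Renaming $j$ to $i$ then yields Howard's formula verbatim, so your proposal constitutes a valid self-contained proof of the cited theorem.
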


Now we prove Theorem \ref{thm1}.

\begin{proof}[Proof of Theorem \ref{thm1}]
Brassesco and M\'{e}ndez showed that if
\begin{equation}
G\left( x \right) = 2\frac{{e^x  - x - 1}}{{x^2 }} = 2\sum\limits_{j \ge 0} {\frac{{x^j }}{{\left( {j + 2} \right)!}}},
\end{equation}
then
\begin{equation}\label{pf1}
a_k  = \frac{1}{{2^k k!}}\partial ^{2k} \left( {G^{ - \frac{{2k + 1}}{2}} } \right)\left( 0 \right),
\end{equation}
where $\partial ^{k}f$ denotes the $k$th derivative of a function $f$. Define the polynomials $G^{\left(z\right)}_n$ in the variable $z$ by the following exponential generating function:
\begin{equation}
\left( {\frac{1}{2}\frac{{x^2 }}{{e^x  - x - 1}}} \right)^z  = \sum\limits_{j \ge 0} {G_j^{\left( z \right)} \frac{{x^j }}{{j!}}} .
\end{equation}
Inserting $z=\frac{2k+1}{2}$ into this expression gives
\begin{equation}\label{pf2}
\sum\limits_{j \ge 0} {G_j^{\left( {\frac{{2k + 1}}{2}} \right)} \frac{{x^j }}{{j!}}}  = \left( {\frac{1}{2}\frac{{x^2 }}{{e^x  - x - 1}}} \right)^{\frac{{2k + 1}}{2}}  = \left( {2\frac{{e^x  - x - 1}}{{x^2 }}} \right)^{ - \frac{{2k + 1}}{2}}  = G^{ - \frac{{2k + 1}}{2}} \left( x \right).
\end{equation}
On the other hand we have by series expansion
\begin{equation}\label{pf3}
G^{ - \frac{{2k + 1}}{2}} \left( x \right) = \sum\limits_{j \ge 0} {\partial ^j \left( {G^{ - \frac{{2k + 1}}{2}} } \right)\left( 0 \right)\frac{{x^j}}{{j!}}} .
\end{equation}
Equating the coefficients in \eqref{pf2} and \eqref{pf3} gives
\[
\partial ^j \left( {G^{ - \frac{{2k + 1}}{2}} } \right)\left( 0 \right) = G_j^{\left( {\frac{{2k + 1}}{2}} \right)}  = G_j^{\left( {k + \frac{1}{2}} \right)} .
\]
Now by comparing this with \eqref{pf1} yields
\begin{equation}\label{pf4}
a_k  = \frac{1}{{2^k k!}}G_{2k}^{\left( {k + \frac{1}{2}} \right)} .
\end{equation}
Putting $r=2$ an $a_r = a_{r+1} = \ldots = 1$ into the formal power series $F\left( x \right) = \sum\nolimits_{j \ge r} {a_j x^j / j!}$ gives $F\left( x \right) = e^{x}-x-1$. And therefore the generated potential polynomials are
\[
\left( {\frac{{x^2 / 2!}}{{e^x  - x - 1}}} \right)^z  = \left( {\frac{1}{2}\frac{{x^2 }}{{e^x  - x - 1}}} \right)^z  = \sum\limits_{j \ge 0} {G_j^{\left( z \right)} \frac{{x^j }}{{j!}}} .
\]
According to Howard's theorem we find
\begin{equation}
G_n^{\left( z \right)}  = \sum\limits_{i = 0}^n {\left( { - 1} \right)^i \binom{z + i - 1}{i}\binom{z + n}{n - i}2^{i}\frac{{n!i!}}{{\left( {n + 2i} \right)!}}B_{n + 2i,i} \left( {0, 1 ,1, \ldots } \right)} .
\end{equation}
Now we derive an expression for the exponential Bell polynomials $B_{n,i}\left(0,1,1,\ldots\right)$ in terms of the Stirling numbers of the second kind:
\begin{align*}
i!\sum\limits_{n \ge 0} {B_{n,i} \left( {0, 1, 1 , \ldots } \right)\frac{{x^n }}{{n!}}} & = \left( {F\left( x \right)} \right)^i  = \left( {e^x  - x - 1} \right)^i \\
& = \left( { - x + \sum\limits_{l \ge 1} {\frac{{x^l }}{{l!}}} } \right)^i  = \sum\limits_{j = 0}^i {\binom{i}{j}\left( { - 1} \right)^{i - j} x^{i - j} \left( {\sum\limits_{l \ge 1} {\frac{{x^l }}{{l!}}} } \right)^j } \\
& = \sum\limits_{j = 0}^i {\binom{i}{j}\left( { - 1} \right)^{i - j} x^{i - j} j!\sum\limits_{n \ge 0} {S\left( {n,j} \right)\frac{{x^n }}{{n!}}} } \\
& = \sum\limits_{n \ge 0} {\sum\limits_{j = 0}^i {\binom{i}{j}\left( { - 1} \right)^{i - j} j!S\left( {n,j} \right)\frac{{x^{n + i - j} }}{{n!}}} }\\
& = i!\sum\limits_{n \ge 0} {\left\{ {\frac{{n!}}{{i!}}\sum\limits_{j = 0}^i {\binom{i}{j}\left( { - 1} \right)^{i - j} j!} \frac{{S\left( {n - i + j,j} \right)}}{{\left( {n - i + j} \right)!}}} \right\}\frac{{x^n }}{{n!}}} .
\end{align*}
Hence
\begin{equation}
B_{n,i} \left( {0,1,1 , \ldots } \right) = \frac{{n!}}{{i!}}\sum\limits_{j = 0}^i {\binom{i}{j}\left( { - 1} \right)^{i - j} j!} \frac{{S\left( {n - i + j,j} \right)}}{{\left( {n - i + j} \right)!}}.
\end{equation}
Thus we obtain
\begin{equation}
G_n^{\left( z \right)}  = \sum\limits_{i = 0}^n {\binom{z + i - 1}{i}\binom{z + n}{n - i}2^{i}n!\sum\limits_{j = 0}^i {\binom{i}{j}\left( { - 1} \right)^j j!} \frac{{S\left( {n + i + j,j} \right)}}{{\left( {n + i + j} \right)!}}} .
\end{equation}
Substituting $z=k+1/2$ and $n=2k$ into this expression yields
\begin{equation}
G_{2k}^{\left( k+\frac{1}{2} \right)}  = \sum\limits_{i = 0}^{2k} {\binom{k + i - 1/2}{i}\binom{3k + 1/2}{2k - i}2^{i}\left(2k\right)!\sum\limits_{j = 0}^i {\binom{i}{j}\left( { - 1} \right)^j j!} \frac{{S\left( {2k + i + j,j} \right)}}{{\left( {2k + i + j} \right)!}}},
\end{equation}
hence by (\ref{pf4}) we finally have
\begin{equation}
a_k  = \frac{\left(2k\right)!}{2^k k!}\sum\limits_{i = 0}^{2k} {\binom{k + i - 1/2}{i}\binom{3k + 1/2}{2k - i}2^{i}\sum\limits_{j = 0}^i {\binom{i}{j}\left( { - 1} \right)^j j!} \frac{{S\left( {2k + i + j,j} \right)}}{{\left( {2k + i + j} \right)!}}}.
\end{equation}
This completes the proof of the theorem.
\end{proof}

\section*{Acknowledgment}
I am grateful to Lajos L\'{a}szl\'{o}, who drew my attention to the paper of Brassesco and M\'{e}ndez.


\begin{thebibliography}{1}
\setlength{\itemsep}{2pt}

\bibitem{ref1}
C.~M.~Bender, S.~A.~Orszag,
\newblock {\em Advanced Mathematical Methods for Scientists and Engineers},
\newblock Mcgraw--Hill Book Company, 1978, 218.

\bibitem{ref2}
F.~T.~Howard, A theorem relating potential and Bell polynomials,
{\em Discrete Math.\/}~{\bf 39} (1982), 129--143.

\bibitem{ref3}
G.~Marsaglia, J.~C.~Marsaglia, A new derivation of Stirling's approximation to $n!$,
{\em Amer. Math. Monthly\/}~{\bf 97} (1990), 826--829.

\bibitem{ref4}
J.~M.~Borwein, R.~M.~Corless, Emerging Tools for Experimental Mathematics,
{\em Amer. Math. Monthly\/}~{\bf 106} (1999), 899--909.

\bibitem{ref5}
L.~Comtet,
\newblock {\em Advanced Combinatorics: The Art of Finite and Infinite Expansions},
\newblock D. Reidel Publishing Company, 1974, 267.

\bibitem{ref6}
R.~B.~Paris, D.~Kaminski,
\newblock {\em Asymptotics and Mellin--Barnes integrals},
\newblock Cambridge University Press, 2001, 32.

\bibitem{ref7}
S.~Brassesco, M.~A.~M\'{e}ndez, The asymptotic expansion for $n!$ and Lagrange inversion formula,
\newblock \texttt{arXiv: 1002.3894v1 [math.CA]} (2010).

\end{thebibliography}
\end{document}